%------------------------------------------------------------------------------
% Beginning of journal.tex
%------------------------------------------------------------------------------
%
% AMS-LaTeX version 2 sample file for journals, based on amsart.cls.
%
%        ***     DO NOT USE THIS FILE AS A STARTER.      ***
%        ***  USE THE JOURNAL-SPECIFIC *.TEMPLATE FILE.  ***
%
% Replace amsart by the documentclass for the target journal, e.g., tran-l.
%
\documentclass{amsart}
\usepackage{pdfpages}
\newtheorem{theorem}{Theorem}[section]

\theoremstyle{definition}
\newtheorem{definition}[theorem]{Definition}

\theoremstyle{remark}

\numberwithin{equation}{section}

%    Absolute value notation

%    Blank box placeholder for figures (to avoid requiring any
%    particular graphics capabilities for printing this document).

\begin{document}

\title{Spherical indicatrices with the modified orthogonal frame}

%    Information for first author
\author{Mohamd Saleem Lone}
%    Address of record for the research reported here
\address{International Centre for Theoretical Sciences, Tata Institute of Fundamental Research, 560089, Bengaluru, India}
%    Current address
\curraddr{International Centre for Theoretical Sciences, Tata Institute of Fundamental Research, 560089, Bengaluru, India}
\email{mohamdsaleem.lone@icts.res.in}
%    \thanks will become a 1st page footnote.
%\thanks{The first author was supported in part by NSF Grant \#000000.}

%    Information for second author
\author{Murat Kemal Karacan}
\address{Usak University, Faculty of Sciences and Arts, Department of
Mathematics,1 Eylul Campus, 64200,Usak-Turkey}
\email{murat.karacan@usak.edu.tr}
%\thanks{Support information for the second author.}

\author{Yilmaz Tuncer}
\address{Usak University, Faculty of Sciences and Arts, Department of
Mathematics,1 Eylul Campus, 64200,Usak-TURKEY}
\email{yilmaz.tuncer@usak.edu.tr}
%\thanks{Support information for the second author.}

\author{Hasan Es}
\address{Gazi University, Gazi Educational Faculty, Department of
Mathematical Education, 06500 Teknikokullar / Ankara-Turkey}
\email{hasanes@gazi.edu.tr}
%\thanks{Support information for the second author.}

%    General info
\subjclass[2000]{53A04, 53A35}

%\date{January 1, 2001 and, in revised form, June 22, 2001.}

%\dedicatory{This paper is dedicated to our advisors.}

\keywords{Spherical indicatrix, Darboux indicatrix, modified orthogonal frame.}

\begin{abstract}
In this paper, we study spherical images of the modified orthogonal
vector fields and Darboux vector of a regular curve which lies on the unit
sphere in Euclidean 3-space.
\end{abstract}

\maketitle

\section{ \protect \bigskip Introduction}

Many interesting properties of a space curve $\alpha $ in $E^{3}$ can be investigated by means of the concept of spherical indicatrix of the tangent, principal normal, binormal and Darboux vector to $\alpha $ [7]. Most commonly researchers use the Frenet frame of a curve to characterize the properties of curves. In addition to Frenet frame various frames have been designed. The study of curves with respect to these frames are qualified problems [3]. In [1,2,5-7], the authors have characterized the spherical indicatrices of a
curve in different frames.

\bigskip \ In this paper, we obtain spherical representations of a curve
with respect to the modified orthogonal frame in\ Euclidean 3-space.

\section{Preliminaries}

\bigskip Let $\alpha(s)$ be a $C^{3}$ space curve in Euclidean 3-space $E^3$, parametrized by arc length $s$. We also assume that{ \it its
curvature $\kappa (s)\neq 0$ anywhere}. Then an
orthonormal frame $\left \{ t,n,b\right \} $ exists satisfying the
Frenet-Serret equations%
\begin{equation}
\left[ 
\begin{array}{c}
t^{\prime }(s) \\ 
n^{\prime }(s) \\ 
b^{\prime }(s)%
\end{array}%
\right] =\left[ 
\begin{array}{ccc}
0 & \kappa & 0 \\ 
-\kappa & 0 & \tau \\ 
0 & -\tau & 0%
\end{array}%
\right] \left[ 
\begin{array}{c}
t(s) \\ 
n(s) \\ 
b(s)%
\end{array}%
\right],  \tag{2.1}
\end{equation}%
where $t$ is the unit tangent, $n$ is the unit principal normal, $b$ is the unit binormal, and $\tau (s)$ is the torsion. For a given  $C^{1}$ function $\kappa (s)$  and a continuous function $\tau (s)$, there exists a $C^{3}$ curve $\varphi$ which has an orthonormal frame $\left \{t,n,b\right \} $ satisfying the Frenet-Serret frame (2.1). Moreover, any other curve $\tilde{\varphi}$ satisfying the same conditions, differs from $\varphi$ only by a rigid motion.

Now let $\alpha(s)$ be a general analytic curve which can be reparametrized by its arc length. Assuming that the curvature function has {\it discrete zero points} or $\kappa(s)$ {\it is not identically zero}, we have an orthogonal frame $\left \{ T,N,B\right \} $ defined as follows:%
\begin{equation*}
T=\frac{d\varphi }{ds},\quad  N=\frac{dT}{ds},\quad B=T\times N,
\end{equation*}%
where $T\times N$ is the vector product of $T$ and $N$. The relations
between $\left \{ T,N,B\right \} $ and previous Frenet frame vectors at
non-zero points of $\kappa $ are%
\begin{equation}
T=t,N=\kappa n,B=\kappa b.  \tag{2.2}
\end{equation}%
Thus, we see that $N(s_{0})=B(s_{0})=0$ when $\kappa (s_{0})=0$ and squares of the length
of $N$ and $B$ vary analytically in $s$. From Eq. (2.2), it is easy to calculate

\begin{equation}
\left[ 
\begin{array}{c}
T^{\prime }(s) \\ 
N^{\prime }(s) \\ 
B^{\prime }(s)%
\end{array}%
\right] =\left[ 
\begin{array}{ccc}
0 & 1 & 0 \\ 
-\kappa ^{2} & \frac{\kappa ^{\prime }}{\kappa } & \tau \\ 
0 & -\tau & \frac{\kappa ^{\prime }}{\kappa }%
\end{array}%
\right] \left[ 
\begin{array}{c}
T(s) \\ 
N(s) \\ 
B(s)%
\end{array}%
\right]  \tag{2.3}
\end{equation}%
and 
\begin{equation*}
\tau =\tau (s)=\frac{\det \left( \alpha ^{\prime },\alpha ^{\prime \prime
},\alpha ^{\prime \prime \prime }\right) }{\kappa ^{2}}
\end{equation*}%
is the torsion of $\alpha $. From Frenet-Serret equations, we know that any point, where $\kappa ^{2}=0$ is a removable singularity of $\tau $. Let $\left \langle ,\right \rangle $ be the standard inner product of $E^3$, then $\left \{ T,N,B\right \} $ satisfies:%
\begin{equation}
\left \langle T,T\right \rangle =1,\left \langle N,N\right \rangle =\left
\langle B,B\right \rangle =\kappa ^{2},\left \langle T,N\right \rangle
=\left \langle T,B\right \rangle =\left \langle N,B\right \rangle =0
\tag{2.4}.
\end{equation}%
The orthogonal frame defined in Eq. (2.3) satisfying Eq. (2.4) is called as modified orthogonal frame [8].

\section{Darboux vector with modified orthogonal frame}

Let $\alpha $ be a unit speed curve and let $\left \{ T,N,B\right \} $ be
the modified orthogonal frame at point $\alpha (s)$ along curve $\alpha $ in $%
E^{3}$ and also derivative vectors of the orthogonal frame $\left \{
T,N,B\right \} $ be given as 
\begin{equation}
\left \{ 
\begin{array}{c}
T^{\prime }=N \\ 
N^{\prime }=-\kappa ^{2}T+\dfrac{\kappa ^{\prime }}{\kappa }\  \ N+\tau B \\ 
B^{\prime }=-\tau N+\dfrac{\kappa ^{\prime }}{\kappa }B.%
\end{array}%
\right.  \tag{3.1}
\end{equation}%
Moreover, let $w$ be%
\begin{equation}
w=aT+bN+cB.  \tag{3.2}
\end{equation}%
Then we can find components of vector $w$ such that it satisfies equalities:%
\begin{equation}
\left \{ 
\begin{array}{c}
T^{\prime }=w\times T \\ 
N^{\prime }=w\times N \\ 
B^{\prime }=w\times B.%
\end{array}%
\right.  \tag{3.3}
\end{equation}%
Also one can do the computations below:%
\begin{equation}
\left \{ 
\begin{array}{c}
w\times T=b\left( N\times T\right) +c\left( B\times T\right) =cN-bB \\ 
w\times N=a\left( T\times N\right) +c\left( B\times N\right) =-c\kappa
^{2}T+aB \\ 
w\times B=a\left( T\times B\right) +b\left( N\times B\right) =b\kappa
^{2}T+aN.%
\end{array}%
\right.  \tag{3.4}
\end{equation}%
From Eq.(3.4) we can compute components of the vector $w$ as follows%
\begin{equation}
a=\tau ,\  \  \ b=0,\  \ c=1,\kappa =const.  \tag{3.5}
\end{equation}%
Thus we write vector $w$ and $\left \Vert w\right \Vert $ as%
\begin{equation}
w(s)=\tau (s)T(s)+B(s),\  \  \  \kappa =const.  \tag{3.6}
\end{equation}%
and%
\begin{equation*}
\left \Vert w\right \Vert =\sqrt{\kappa ^{2}+\tau ^{2}},\  \  \  \kappa =const.
\end{equation*}%
$w$ in Eq.(3.6) is called Darboux vector of the curve $\alpha .$ Now we
easily can write the followings equalities%
\begin{equation}
\left \{ 
\begin{array}{c}
T\times T^{^{\prime }}=B \\ 
N\times N^{^{\prime }}=N\times (-\kappa ^{2}T+\dfrac{\kappa ^{\prime }}{%
\kappa }N+\tau B)=\tau \kappa ^{2}T+\kappa ^{2}B=\kappa ^{2}\left( \tau
T+B\right) \\ 
B\times B^{^{\prime }}=B\times (-\tau N+\dfrac{\kappa ^{\prime }}{\kappa }%
B)=\tau \kappa ^{2}T.%
\end{array}%
\right. .  \tag{3.7}
\end{equation}%
Moreover we can write $N\times N^{^{\prime }}$ as%
\begin{equation}
N\times N^{\prime }=\kappa ^{2}w.  \tag{3.8}
\end{equation}%
From Eq.(3.8), we infer the result that $w$ and $N\times N^{\prime }$ are
linear dependent. \ If $\kappa $ equals one then Frenet and the orthogonal
frames coincide for every $s\in I$. Let $\phi $ be angle between $B$ and
darboux vector $w.$ Then we have the following equalities%
\begin{equation}
\left \{ 
\begin{array}{c}
\sin \phi =\frac{\allowbreak \tau }{\sqrt{\kappa ^{2}+\tau ^{2}}}\  \text{\
or }\  \  \allowbreak \tau =\left \Vert w\right \Vert \sin \phi \\ 
\cos \phi =\frac{\allowbreak \kappa }{\sqrt{\kappa ^{2}+\tau ^{2}}}\text{ \
or \  \ }\kappa =\left \Vert w\right \Vert \cos \phi .%
\end{array}%
\right.  \tag{3.9}
\end{equation}
Let $M$ be a hypersurface in $E^{n}$ and the $S$ be the shape operator of $M$ with $D$ acting as covariant derivative. Then, we have the famous  Gauss equation: 
\begin{equation*}
\overline{D}_{X}Y=D_{X}Y+\left \langle S(X),Y\right \rangle N
\end{equation*}
for each $X,Y\in \chi (M)$. 
\section{Arclength of spherical representations of the
curve}

\begin{definition}
Let $\alpha $ be a unit speed regular curve in Euclidean $3-$space with
modified orthogonal frame $T,$ $N$ and $B$. The unit tangent vectors along the
curve $\alpha $ generate a curve $(T)$ on the sphere of radius one about the
origin. The curve $(T)$ is called the spherical indicatrix of $T$ or more
commonly, $(T)$ is called tangent indicatrix of the curve $\alpha $. If $%
\alpha =\alpha (s)$ is a natural representation of the curve $\alpha $, then 
$(T)=T(s)$ will be a representation of $(T)$. Similarly one considers the
principal normal indicatrix $(N)=N(s)$ and binormal indicatrix $(B)=B(s)$
[8].
\end{definition}
\begin{theorem}
Let $\alpha :I\subset R\rightarrow E^{3}$ be a curve and $T(s)$, $N(s)$, $B(s)$ be its tangent, normal and the binormal, then the following hold: 
\begin{itemize}
\item[(1)]\begin{equation*}
D_{T_{_{T}}}T_{_{T}}=-T+\frac{\tau }{\kappa ^{2}}B.  
\end{equation*}
\item[(2)] \begin{equation*}
D_{T_{N}}T_{N}=\frac{\phi ^{\prime }}{\kappa ^{2}\left \Vert w\right \Vert }%
\left( \kappa \sin \phi T+\cos \phi B\right) -\frac{1}{\kappa ^{2}}N. 
\end{equation*}
\item[(3)] \begin{equation*}
D_{T_{B}}T_{B}=\frac{1}{\tau }T-\frac{1}{\kappa ^{2}}B. 
\end{equation*}
\end{itemize}
\end{theorem}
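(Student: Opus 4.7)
The overall plan is to apply the same three-step recipe to each indicatrix. For $X \in \{T, N, B\}$, first read off the indicatrix speed $\|X'(s)\|$ directly from (3.1); next form the unit tangent $T_X = X'/\|X'\|$ to the curve $(X)$; finally compute $D_{T_X} T_X = (T_X)'/\|X'\|$, where the primes are $d/ds$ along $\alpha$. Because the indicatrices sit in $E^{3}$, the covariant derivative $D$ is just the Euclidean derivative, so everything reduces to differentiating twice in $s$ and using (3.1) to express the result back in the frame $\{T, N, B\}$.

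For part (1), $T' = N$ together with $\langle N, N\rangle = \kappa^2$ gives $T_T = N/\kappa$, and one further differentiation using (3.1) produces the claim: the $(\kappa'/\kappa)N$ term inside $N'$ cancels against the contribution from differentiating the normalising factor $1/\kappa$, so no assumption on $\kappa$ is needed. For parts (2) and (3) I will work under the Section 3 hypothesis $\kappa = \mathrm{const}$, under which $w = \tau T + B$, $\|w\|^{2} = \kappa^{2} + \tau^{2}$, $\sin\phi = \tau/\|w\|$, $\cos\phi = \kappa/\|w\|$, and $\phi' = \kappa\tau'/\|w\|^{2}$. The indicatrix speeds then collapse to $\|N'\| = \kappa\|w\|$ and $\|B'\| = \tau\kappa$, giving $T_{N} = N'/(\kappa\|w\|)$ and $T_{B} = -N/\kappa$. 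Part (3) is then immediate: one differentiation of $T_{B}$, with $N'$ expanded via (3.1), gives $(1/\tau)T - (1/\kappa^{2})B$.

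The heart of the argument is part (2). Differentiating $T_{N}$ requires $N'' = -\|w\|^{2} N + \tau' B$ (from (3.1) with $\kappa$ constant) together with $\|w\|'/\|w\| = \tau\tau'/\|w\|^{2}$. The resulting expression
\[
D_{T_{N}} T_{N} = \frac{1}{\kappa^{2}\|w\|^{2}}\Bigl(N'' - \frac{\|w\|'}{\|w\|}N'\Bigr)
\]
is a linear combination of $T$, $N$, $B$; the $N$-coefficient is manifestly $-1/\kappa^{2}$, but the $T$- and $B$-coefficients must be regrouped through the identity $\tau' = \phi'\|w\|^{2}/\kappa$ into the trigonometric form $(\phi'/\kappa^{2}\|w\|)(\kappa\sin\phi\, T + \cos\phi\, B)$. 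This final matching is the main obstacle; the rest of the proof is direct substitution from (3.1).
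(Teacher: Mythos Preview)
Your proposal is correct and follows essentially the same route as the paper: for each indicatrix, read off the speed from (3.1), form the unit tangent $T_X$, and differentiate once more in $s$ (under $\kappa=\mathrm{const}$ for parts (2) and (3)). The only cosmetic difference is in part (2): the paper first rewrites $T_N=-\cos\phi\,T+\frac{\sin\phi}{\kappa}B$ and then differentiates in $\phi$, whereas you differentiate $N'/(\kappa\|w\|)$ via $N''$ and convert to the trigonometric form at the end through $\tau'=\phi'\|w\|^{2}/\kappa$; the two computations are equivalent.
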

\begin{proof}
Let $T=T(s)$ be the tangent vector field of the curve \ $\alpha :I\subset
R\rightarrow E^{3}.$ The spherical curve $\alpha _{_{T}}=T$ on $S^{2}$ is
called 1$^{st}$ spherical representation of the tangents of $\alpha $. Let $%
s $ be the arclength parameter of\ $\alpha $. If we denote the arclength of
the curve $\alpha _{_{T}}$ by $s_{_{T}}$, then we may write
\begin{equation}
\alpha _{T}(s_{_{T}})=T(s).  \tag{4.1}
\end{equation}%
By differentiating the both side of Eq.(4.1) with respect to $s$ and
using norm, we get%
\begin{equation*}
T_{_{T}}=N\frac{ds_{_{T}}}{ds},
\end{equation*}%
\begin{equation*}
\frac{ds_{_{T}}}{ds}=\kappa
\end{equation*}%
and%
\begin{equation*}
T_{T}=\frac{N}{\kappa }.
\end{equation*}%
If we differentiate with respect to $s$ again, we obtain 
\begin{equation*}
\frac{d}{ds}\left( T_{_{T}}\right) =\frac{d}{ds}\left( \frac{1}{\kappa }%
N\right)
\end{equation*}%
\begin{equation*}
\frac{dT_{T}}{ds_{T}}=\left \{ \left( \frac{1}{\kappa }\right) ^{\prime }N+%
\frac{1}{\kappa }\left( -\kappa ^{2}T+\frac{\kappa ^{\prime }}{\kappa }%
N+\tau B\right) \right \} \frac{ds}{ds_{T}}
\end{equation*}%
\begin{equation*}
\frac{dT_{T}}{ds_{T}}=\left[ \left( \frac{1}{\kappa }\right) ^{\prime
}N-\kappa T+\frac{\kappa ^{\prime }}{\kappa ^{2}}N+\frac{\tau }{\kappa }B%
\right] \frac{1}{\kappa }
\end{equation*}%
\begin{equation}
D_{T_{_{T}}}T_{_{T}}=-T+\frac{\tau }{\kappa ^{2}}B.  \tag{4.2}
\end{equation}
which is part (1) of the theorem.
%Thus we have%
%\begin{equation*}
%k_{T}=\sqrt{1+\left( \frac{\tau }{\kappa }\right) ^{2}},
%\end{equation*}%
%If \ $\kappa =1$, then we have 
%\begin{equation*}
%k_{T}=\sqrt{1+\tau ^{2}}
%\end{equation*}%
%where $k_{T}$ denotes geodesic curvature of the curve $\alpha $ according to 
%$E^{3}.$
The spherical curve $\alpha _{_{N}}=N(s)$ on $S^{2}$ is called $2^{nd}$
spherical representation or the spherical representation of $N$ of the
curve $\alpha $. Let $s\in I$ be the arclenght of the curve $\alpha $. If we
denote the arclength of $N$ by $s_{_{N}}$ and take $\kappa =$constant, we
can write%
\begin{equation}
\alpha _{_{N}}(s_{_{N}})=N(s).  \tag{4.3}
\end{equation}%
Differentiating  both side of \ Eq.(4.3) with respect to $s$ , we get%
\begin{equation}
\frac{d\alpha _{_{N}}}{ds_{_{N}}}=\left( -\kappa ^{2}T+\tau B\right) \frac{ds%
}{ds_{_{N}}}.  \tag{4.4}
\end{equation}%
Taking norm of the Eq.(4.4), we obtain%
\begin{equation*}
\frac{ds_{_{N}}}{ds}=\kappa \sqrt{\kappa ^{2}+\tau ^{2}}\text{ or }\frac{%
ds_{_{N}}}{ds}=\kappa \left \Vert w\right \Vert .
\end{equation*}%
Hence we get $T_{_{N}}$ as%
\begin{equation*}
T_{_{N}}=-\frac{\kappa }{\left \Vert w\right \Vert }T+\frac{1}{\kappa }\frac{%
\tau }{\left \Vert w\right \Vert }B
\end{equation*}%
or%
\begin{equation}
\text{ }T_{_{N}}=-\cos \phi T+\frac{1}{\kappa }\sin \phi B\mathrm{.} 
\tag{4.5}
\end{equation}
 If we differentiate Eq.(4.5) with respect to $s$ and by 
Eq.(3.9), we have%
\begin{eqnarray*}
\dfrac{dT_{N}}{ds_{N}} &=&\left[ \phi ^{\prime }\sin \phi T-\cos \phi N+%
\frac{\phi ^{\prime }}{\kappa }\cos \phi B+\frac{\sin \phi }{\kappa }\left(
-\tau N\right) \right] \frac{ds}{ds_{N}} \\
&=&\left[ \phi ^{\prime }\left( \kappa \sin \phi T+\cos \phi B\right)
-\left( \kappa \cos \phi +\tau \sin \phi \right) N\right] \frac{1}{\kappa
^{2}\left \Vert w\right \Vert } \\
&=&\left[ \phi ^{\prime }\left( \kappa \sin \phi T+\cos \phi B\right) -\left
\Vert w\right \Vert N\right] \frac{1}{\kappa ^{2}\left \Vert w\right \Vert }
\end{eqnarray*}%
or 
\begin{equation}
D_{T_{N}}T_{N}=\frac{\phi ^{\prime }}{\kappa ^{2}\left \Vert w\right \Vert }%
\left( \kappa \sin \phi T+\cos \phi B\right) -\frac{1}{\kappa ^{2}}N, 
\tag{4.6}
\end{equation}%
which is (2) of the theorem. 
%By taking norm the last equation, we have
%\begin{equation*}
%k_{N}=\left \Vert \dfrac{dT_{N}}{ds_{N}}\right \Vert =\sqrt{\frac{1}{\kappa
%^{2}}+\frac{\left( \phi ^{\prime }\right) ^{2}}{\left \Vert w\right \Vert
%^{2}}.}
%\end{equation*}%
%\ If $\kappa =1$, then we get%
%\begin{equation}
%k_{N}=\sqrt{1+\frac{\left( \phi ^{\prime }\right) ^{2}}{\left \Vert w\right
%\Vert ^{2}}.}  \tag{4.7}
%\end{equation}

The spherical curve $B=B(s)$ on $S^{2}$ is called 3$^{rd}$ spherical
representation or the spherical representation of $B$ of the curve $%
\alpha $. Let $s\in I$ be the arclenght of the curve $\alpha $. If we denote
the arclength of $B$ by $s_{B}$ and take $\kappa =$constant, we can write%
\begin{equation}
\alpha _{B}(s_{B})=B(s).  \tag{4.8}
\end{equation}%
Differentiating  Eq.(4.8) with respect to $s,$ we get%
\begin{equation}
T_{B}\frac{ds}{ds_{_{B}}}=-\tau N.  \tag{4.9}
\end{equation}%
Taking the norm of the Eq.(4.9), we obtain

\begin{equation*}
\frac{ds_{B}}{ds}=\kappa \tau .
\end{equation*}%
So we can write 
\begin{equation}
T_{B}=-\frac{N}{\kappa }.  \tag{4.10}
\end{equation}%
Again differentiating(4.10) with respect to $s$, we get%
\begin{equation*}
\frac{dT_{B}}{ds}=\dfrac{d}{ds}\left( -\frac{N}{\kappa }\right) \dfrac{ds}{%
ds_{B}}=-\frac{1}{\kappa }N^{\prime }\allowbreak \frac{1}{\kappa \tau }=-%
\frac{1}{\kappa ^{2}\tau }N^{\prime }\allowbreak =-\frac{1}{\kappa ^{2}\tau }%
\left( -\kappa ^{2}T+\tau B\right)
\end{equation*}%
or 
\begin{equation}
D_{T_{B}}T_{B}=\frac{1}{\tau }T-\frac{1}{\kappa ^{2}}B,  \tag{4.11}
\end{equation}
which is (3) of the theorem.
%So it is calculated geodesic curvature in $E^{3}$ of $(B)$ , we obtain 
%\begin{equation*}
%k_{B}=\left \Vert D_{T_{B}}T_{B}\right \Vert =\sqrt{\frac{1}{\tau ^{2}}+%
%\frac{1}{\kappa ^{2}}}=\frac{\left \Vert w\right \Vert }{\tau \kappa }.
%\end{equation*}

%\bigskip If $\  \kappa =1$, then we have%
%\begin{equation*}
%k_{B}=\frac{\left \Vert w\right \Vert }{\tau }.
%\end{equation*}
\end{proof}
\section{The pole curves}
\begin{theorem}
Let $\alpha :I\subset R\rightarrow E^{3}$ be a curve and $C$ be the vector field defined as $C=\frac{w}{\left \Vert w\right \Vert }$, then we have
\begin{equation*}
D_{T_{C}}T_{C}=-\sin \phi \allowbreak T-\dfrac{\cos \phi }{\kappa }B+\frac{%
\left \Vert w\right \Vert }{\phi ^{\prime }\kappa }N. 
\end{equation*}
\end{theorem}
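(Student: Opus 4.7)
The plan is to mirror the strategy used for the three indicatrices in Theorem 4.1, now applied to the unit Darboux field $C = w/\|w\|$. First I would rewrite $C$ in the modified orthogonal frame by combining (3.6) with (3.9): since $w = \tau T + B$ under the standing hypothesis $\kappa = \mathrm{const}$, dividing by $\|w\|$ and using $\sin\phi = \tau/\|w\|$, $\cos\phi = \kappa/\|w\|$ gives
\[
C = \frac{\tau}{\|w\|}\,T + \frac{1}{\|w\|}\, B = \sin\phi\, T + \frac{\cos\phi}{\kappa}\, B.
\]
This representation uses only $\phi$, $T$, and $B$, which makes the subsequent differentiations considerably cleaner than working with $\tau$, $\kappa$, $\|w\|$ directly.

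Next I would differentiate this formula with respect to the arclength $s$ of $\alpha$, using the frame equations (3.1) with $\kappa' = 0$. I expect the output to be a combination of $T$, $N$, $B$ in which the $N$-coefficient vanishes identically: the $\sin\phi\cdot N$ term arising from $T' = N$ cancels against the $-\tau\cos\phi/\kappa \cdot N$ term arising from $B' = -\tau N$, by the identity $\tan\phi = \tau/\kappa$ from (3.9). After this cancellation I anticipate
\[
\frac{dC}{ds} = \cos\phi\,\phi'\, T - \frac{\sin\phi\,\phi'}{\kappa}\,B,
\]
and taking the norm (recalling $\|B\| = \kappa$) yields $ds_C/ds = \phi'$, so that
\[
T_C = \cos\phi\, T - \frac{\sin\phi}{\kappa}\, B.
\]

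Finally I would differentiate $T_C$ once more with respect to $s$, again using (3.1) with $\kappa' = 0$. This time the $N$-coefficient does \emph{not} vanish; instead I expect it to collapse via
\[
\cos\phi + \frac{\tau \sin\phi}{\kappa} = \frac{\kappa^2 + \tau^2}{\kappa\|w\|} = \frac{\|w\|}{\kappa},
\]
another direct consequence of (3.9). Dividing the resulting expression by $ds_C/ds = \phi'$ should deliver exactly the claimed identity
\[
D_{T_C} T_C = -\sin\phi\,T - \frac{\cos\phi}{\kappa}\, B + \frac{\|w\|}{\phi'\,\kappa}\, N.
\]

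The main obstacle is purely bookkeeping rather than conceptual: one must separately track which terms come from differentiating $\phi$ and which from differentiating the frame vectors, and then invoke (3.9) at precisely the right moments to convert mixed $\tau, \kappa, \|w\|, \phi$ expressions into the clean $\sin\phi, \cos\phi, \|w\|$ form required by the statement. Committing to the $\phi$-variables from the outset, as in the opening step, is the cleanest way to keep the algebra under control and to see the two $N$-coefficient simplifications transparently.
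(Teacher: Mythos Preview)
Your proposal is correct and follows essentially the same approach as the paper: express $C$ in $\phi$-variables via (3.9), differentiate to obtain $T_C$ and $ds_C/ds=\phi'$, then differentiate again and simplify the $N$-coefficient using $\kappa\cos\phi+\tau\sin\phi=\|w\|$. The only cosmetic difference is that the paper carries the $\kappa'/\kappa$ terms from (2.3) through the second differentiation and shows they cancel, whereas you invoke $\kappa'=0$ at the outset; both are justified by the standing hypothesis $\kappa=\mathrm{const}$.
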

\begin{proof}
Let $s_{C}$ be the arclenght parameter of $(C)$ and the unit vector at
direction of Darboux vector $w$ according to orthogonal frame $E=\left \{
T,N,B\right \} $ be $T_{C}.$ Moreover the geodesic curvature of $(C)$
according to $E^{3}$ be $k_{C}=\left \Vert
D_{_{T_{_{C}}}}T_{_{C}}\right
\Vert .$ Then we can write equation of $(C)$
as 
\begin{equation}
\alpha (s_{C})=C=\frac{w}{\left \Vert w\right \Vert }=\allowbreak \frac{\tau 
}{\left \Vert w\right \Vert }T+\frac{1}{\left \Vert w\right \Vert }B. 
\tag{4.12}
\end{equation}%
From Eq.(3.9), we get%
\begin{equation}
C=\allowbreak \left( \sin \phi \right) T+\left( \frac{\cos \phi }{\kappa }%
\right) B.  \tag{4.13}
\end{equation}

\bigskip Differentiating Eq.(4.13) with respect to $s_{c}$ and by Eq.(2.3),
we get$\allowbreak $%
\begin{equation}
\frac{dC}{ds_{C}}=\frac{\phi ^{\prime }}{\kappa }\left \{ \left( \kappa \cos
\phi T-\sin \phi B\allowbreak \right) +\left( \kappa \sin \phi -\tau \cos
\phi \right) N\right \} \frac{ds}{ds_{_{C}}}.  \tag{4.14}
\end{equation}%
Using Eq.(3.9), we can write (4.14) as%
\begin{equation}
\frac{dC}{ds_{C}}=\frac{\phi ^{\prime }}{\kappa }\left( \kappa \cos \phi
T-\sin \phi B\allowbreak \right) \frac{ds}{ds_{_{C}}}.  \tag{4.15}
\end{equation}%
Since $T_{C}=\frac{dC}{ds_{C}}$, by taking the norm of Eq.(4.15), we get 
\begin{equation*}
\frac{ds_{C}}{ds}=\frac{\phi ^{\prime }}{\kappa }\sqrt{\kappa ^{2}\cos
^{2}\phi +\sin ^{2}\phi \kappa ^{2}\allowbreak }=\phi ^{\prime }
\end{equation*}%
and

\begin{equation}
T_{C}=\cos \phi T-\frac{\sin \phi }{\kappa }B.  \tag{4.16}
\end{equation}

Differentiating Eq.(4.16) with respect to $s$ and using Eq.(2.3) and
Eq.(3.9), we calculate $D_{T_{C}}T_{C}$ as 
\begin{eqnarray*}
D_{T_{C}}T_{C} &=&\left \{ -\phi ^{\prime }\sin \phi \allowbreak T+\cos \phi
N-\left( \frac{\sin \phi }{\kappa }\right) ^{^{\prime }}B-\left( \frac{\sin
\phi }{\kappa }\right) B^{^{\prime }}\right \} \frac{ds}{ds_{_{C}}}\  \  \  \  \
\  \  \  \  \  \  \  \  \  \  \  \  \  \  \  \  \  \  \  \  \  \  \  \  \  \  \\
&=&\left \{ -\phi ^{\prime }\sin \phi \allowbreak T+\cos \phi N-\left( 
\dfrac{\phi ^{\prime }\cos \phi }{\kappa }-\dfrac{\kappa ^{\prime }\sin \phi 
}{\kappa ^{2}}\allowbreak \right) B-\frac{\sin \phi }{\kappa }\left( -\tau N+%
\dfrac{\kappa ^{\prime }}{\kappa }B\right) \right \} \frac{1}{\phi ^{\prime }%
}
\end{eqnarray*}%
\begin{eqnarray*}
D_{T_{C}}T_{C} &=&\left \{ -\phi ^{\prime }\sin \phi \allowbreak T+\cos \phi
N-\dfrac{\phi ^{\prime }\cos \phi }{\kappa }B+\dfrac{\kappa ^{\prime }\sin
\phi \allowbreak }{\kappa ^{2}}B-\allowbreak \frac{\kappa ^{\prime }\sin
\phi }{\kappa ^{2}}B+\frac{\tau \sin \phi }{\kappa }N\right \} \frac{1}{\phi
^{\prime }}\  \\
&=&\left \{ -\sin \phi \allowbreak T+\dfrac{\cos \phi }{\kappa }B+\frac{1}{%
\phi ^{\prime }}\left( \cos \phi N+\frac{\tau \sin \phi }{\kappa }N\right) +%
\frac{1}{\phi ^{\prime }}\left( \dfrac{\kappa ^{\prime }\sin \phi
\allowbreak }{\kappa ^{2}}-\allowbreak \frac{\kappa ^{\prime }\sin \phi }{%
\kappa ^{2}}\right) B\right \}
\end{eqnarray*}%
\begin{eqnarray*}
D_{T_{C}}T_{C} &=&-\sin \phi \allowbreak T-\cos \phi \dfrac{B}{\kappa }+%
\frac{1}{\phi ^{\prime }}\frac{\kappa \cos \phi +\tau \sin \phi }{\kappa }N
\\
&=&-\sin \phi \allowbreak T-\cos \phi \dfrac{B}{\kappa }+\frac{1}{\phi
^{\prime }}\frac{\kappa \frac{\kappa }{\left \Vert w\right \Vert }+\tau 
\frac{\tau }{\left \Vert w\right \Vert }\sin \phi }{\kappa }N
\end{eqnarray*}%
\begin{eqnarray*}
D_{T_{C}}T_{C} &=&-\sin \phi \allowbreak T-\cos \phi \dfrac{B}{\kappa }+%
\frac{1}{\phi ^{\prime }}\frac{\kappa ^{2}+\tau ^{2}}{\left \Vert w\right
\Vert \kappa }N \\
&=&-\sin \phi \allowbreak T-\cos \phi \dfrac{B}{\kappa }+\frac{1}{\phi
^{\prime }}\frac{\left \Vert w\right \Vert ^{2}}{\left \Vert w\right \Vert
\kappa }N\ 
\end{eqnarray*}%
or%
\begin{equation}
D_{T_{C}}T_{C}=-\sin \phi \allowbreak T-\dfrac{\cos \phi }{\kappa }B+\frac{%
\left \Vert w\right \Vert }{\phi ^{\prime }\kappa }N.  \tag{4.17}
\end{equation}
%If we take norm from the Eq.(4.17) and consider the modified orthogonal
%frame, we find%
%\begin{equation*}
%k_{C}=\sqrt{1+\left( \frac{\left \Vert w\right \Vert }{\phi ^{\prime }}%
%\right) ^{2}.}
%\end{equation*}
\end{proof}
\section{ Geodesic curvatures of the curves $\left( T\right)
,\left( N\right) $,$\left( B\right) $ and $\left( C\right) $ according to S$%
^{2}$}
\begin{theorem}
Let $\gamma _{T}$, $\gamma _{N}$, $\gamma _{B}$ and $\gamma _{C}$
be geodesic curvatures of $(T)$, $(N),$ $(B)$ and $(C)$ respectively, then the following holds:
\begin{itemize}
\item[(1)] \begin{equation*}
\gamma _{T}=\sqrt{\left( \frac{\phi ^{\prime }}{\kappa \left \Vert w\right \Vert }%
\right) ^{2}+\left( \frac{\kappa ^{2}-1}{\kappa }\right) ^{2}}.
\end{equation*}
\item[(2)] \begin{equation*}
\gamma _{N}=\sqrt{\frac{\left \Vert w\right \Vert ^{2}}{\kappa ^{2}\tau ^{2}}+\kappa^{2}}.
\end{equation*}
\item[(3)]\begin{equation*}
\gamma _{C}=\frac{\left \Vert w\right \Vert }{\phi ^{\prime }}.
\end{equation*}
\end{itemize}
\end{theorem}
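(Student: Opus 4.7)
The strategy is to compute each geodesic curvature as the norm of the tangential component of $D_{T_X}T_X$ on $S^{2}$, extracted via the Gauss equation recorded at the end of Section~3. At any point $p\in S^{2}\subset E^{3}$ the outward unit normal is $p$ itself and the shape operator is $\pm I$, so the Gauss equation reduces the computation to
\[
\gamma_{X}\;=\;\bigl\|\,D_{T_{X}}T_{X}-\langle D_{T_{X}}T_{X},\,p\rangle\,p\,\bigr\|,
\]
where $p$ denotes the position vector of the indicatrix in question. All the necessary inputs are already in place: Eqs.~(4.2), (4.6), and (4.17) give closed forms for $D_{T_{X}}T_{X}$ when $X$ is $T$, $N$, and $C$.

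For $\gamma_{T}$, I would take $p=T$ and project Eq.~(4.2) onto the tangent plane $T^{\perp}$; the $-T$ summand drops out and the surviving $\frac{\tau}{\kappa^{2}}B$ is rewritten in the $(\phi,\phi',\kappa,\|w\|)$ variables via Eq.~(3.9), its norm being computed through $\langle B,B\rangle=\kappa^{2}$ from Eq.~(2.4). For $\gamma_{N}$, carried out under the $\kappa=\mathrm{const}$ hypothesis used in Eq.~(4.3), the analogous projection is applied to Eq.~(4.6) with $p$ proportional to $N$; the norm of the tangential remainder, combined with $\kappa^{2}+\tau^{2}=\|w\|^{2}$, yields the stated radical. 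For $\gamma_{C}$ the position vector is the full $C=\sin\phi\,T+\frac{\cos\phi}{\kappa}B$ from Eq.~(4.13), so the projection removes precisely the $T$- and $B$-contributions of Eq.~(4.17) that lie along $C$, leaving a single $N$-term whose norm collapses to $\|w\|/\phi'$ after a final application of Eq.~(2.4) and Eq.~(3.9).

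The main obstacle is bookkeeping rather than conceptual: the modified frame is orthogonal but \emph{not} orthonormal, with $\langle N,N\rangle=\langle B,B\rangle=\kappa^{2}$, so every norm must track these $\kappa$-factors honestly, and the projection step for $(N)$ and $(C)$ must use the correctly normalized position vector. A second subtlety is the repeated interchange between $(\kappa,\tau,\|w\|)$ and $(\phi,\phi')$ data via Eq.~(3.9); sign errors slip in easily because $\phi'$ enters the radicals for $\gamma_{T}$ and $\gamma_{C}$ linearly whereas $\kappa$ and $\tau$ enter through $\|w\|^{2}=\kappa^{2}+\tau^{2}$. Once these ingredients are tracked consistently, the three stated identities emerge by collecting the tangential components under a single square root.
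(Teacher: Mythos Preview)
Your projection-via-Gauss-equation approach is exactly the paper's: it computes each $\gamma_X$ as $\|D_{T_X}T_X + p\|$, where $p$ is the indicatrix position vector, and reads off the ingredients from Eqs.~(4.2), (4.6), (4.11), (4.17). However, carrying out your plan as written will \emph{not} reproduce the displayed formulas, because the statement is mislabeled relative to the proof. Feeding Eq.~(4.2) through the projection yields
\[
\gamma_T=\Bigl\|-T+\tfrac{\tau}{\kappa^2}B+T\Bigr\|=\tfrac{\tau}{\kappa}=\tan\phi,
\]
not the radical in item~(1). That radical is what the paper obtains for $\gamma_N$ from Eq.~(4.6); likewise item~(2) is the paper's $\gamma_B$ computed from Eq.~(4.11), not $\gamma_N$. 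Only item~(3) is labeled consistently. So your proposed pairing (4.2)$\to$(1), (4.6)$\to$(2) will produce the wrong targets; you need (4.6)$\to$(1) and (4.11)$\to$(2).

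A second discrepancy: you plan to project against the \emph{normalized} position vector, but the paper adds the unnormalized frame vector itself ($+T$, $+N$, $+B$, $+C$) when forming $\|D_{T_X}T_X+p\|$. Since $\|N\|=\|B\|=\kappa$ in the modified frame, this choice is what generates the $(\kappa^2-1)/\kappa$ and the extra $+\kappa^2$ terms in items~(1) and~(2). If you instead use $N/\kappa$ or $B/\kappa$ as the unit position on $S^2$, you will get $(\kappa-1)/\kappa$ in place of $(\kappa^2-1)/\kappa$, etc., and will not match the stated formulas. So to reproduce the paper you must follow its convention of adding the raw frame vector, not its normalization.
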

\begin{proof}
 Let $\gamma _{T}$, $\gamma _{N}$, $\gamma _{B}$ and $\gamma _{C}$
be geodesic curvatures of $(T)$, $(N),$ $(B)$ and $(C)$ respectively. Using Eqs.(4.2), (4.6), (4.11) and (4.17), we get
\begin{equation*}
\gamma _{T}=\left \Vert D_{T_{T}}T_{T}+\left \langle S(T_{T}),T_{T}\right
\rangle T\right \Vert =\left \Vert D_{T_{T}}T_{T}+1.T\right \Vert =\left
\Vert -T+\frac{\tau }{\kappa ^{2}}B+T\right \Vert =\frac{\tau }{\kappa }%
=\tan \phi ,
\end{equation*}%
\begin{eqnarray*}
\gamma _{_{N}} &=&\left \Vert D_{T_{N}}T_{N}+\left \langle
S(T_{N}),T_{N}\right \rangle N\right \Vert \\
&=&\left \Vert \frac{\phi ^{\prime }}{\kappa ^{2}\left \Vert w\right \Vert }%
\left( \kappa \sin \phi T+\cos \phi B\right) -\frac{1}{\kappa ^{2}}%
N+1.N\right \Vert \\
&=&\left \Vert \frac{\phi ^{\prime }}{\kappa ^{2}\left \Vert w\right \Vert }%
\left( \kappa \sin \phi T+\cos \phi B\right) +\left( \frac{\kappa ^{2}-1}{%
\kappa ^{2}}\right) N\right \Vert \\
&=&\sqrt{\left[ \frac{\phi ^{\prime }}{\kappa ^{2}\left \Vert w\right \Vert }%
\left( \kappa \sin \phi T+\cos \phi B\right) \right] ^{2}+\left( \frac{%
\kappa ^{2}-1}{\kappa ^{2}}\right) ^{2}\kappa ^{2}} \\
\ &=&\sqrt{\left( \frac{\phi ^{\prime }}{\left \Vert w\right \Vert }\right)
^{2}\frac{1}{\kappa ^{4}}\kappa ^{2}+\left( \frac{\kappa ^{2}-1}{\kappa }%
\right) ^{2}} \\
&=&\sqrt{\left( \frac{\phi ^{\prime }}{\kappa \left \Vert w\right \Vert }%
\right) ^{2}+\left( \frac{\kappa ^{2}-1}{\kappa }\right) ^{2},}
\end{eqnarray*}%
\begin{eqnarray*}
\gamma _{B} &=&\left \Vert D_{T_{B}}T_{B}+1.B\right \Vert =\left \Vert \frac{%
1}{\tau }T-\frac{1}{\kappa ^{2}}B+B\right \Vert =\sqrt{\frac{1}{\tau ^{2}}+%
\frac{1}{\kappa ^{2}}+\kappa ^{2}} \\
&=&\sqrt{\frac{\kappa ^{2}+\tau ^{2}}{\kappa ^{2}\tau ^{2}}+\kappa ^{2}} \\
&=&\sqrt{\frac{\left \Vert w\right \Vert ^{2}}{\kappa ^{2}\tau ^{2}}+\kappa
^{2}}
\end{eqnarray*}%
and%
\begin{eqnarray*}
\gamma _{C} &=&\left \Vert D_{T_{C}}T_{C}+\left \langle S(T_{C}),T_{C}\right
\rangle C\right \Vert \\
&=&\left \Vert D_{T_{C}}T_{C}+1.C\right \Vert \\
&=&\left \Vert \left( -\sin \phi \allowbreak T-\dfrac{\cos \phi }{\kappa }B+%
\frac{\left \Vert w\right \Vert }{\phi ^{\prime }\kappa }N\right) +\left(
\sin \phi T+\frac{\cos \phi }{\kappa }B\right) \right \Vert \\
&=&\left \Vert \frac{\left \Vert w\right \Vert }{\phi ^{\prime }\kappa }%
N\right \Vert =\frac{\left \Vert w\right \Vert }{\phi ^{\prime }}.
\end{eqnarray*}
\end{proof}

\begin{theorem}
Let frenet vector fields of a curve $\alpha :I\rightarrow E^{3}$be $T$, $N$, $B$
and also let Darboux vector field of curve $\alpha $ be $w=\tau T+B$ . Let $\alpha _{w},$ $\alpha _{T},$ $\alpha _{N}$ and $\alpha _{B}$ be, respectively,
spherical indicatries of vector fields $w$, $T$, $N$ and $B$ of \ a\ curve%
\textrm{\ }$\alpha $\textrm{\ }in Euclidean $3-$space $E^{3}$.Then

$(i)$ $\alpha _{T}$ is a spherical involute for $\alpha _{\omega},$

$(ii)$ If curve $\alpha $ has the constant curvature, then $\alpha _{B}$ is a
spherical involute for $\alpha _{\omega},$

$(iii)$ If curve $\alpha $ is a helix and $\alpha _{N}$ has the constant curvature, then $\alpha _{N}$ is a spherical involute for $\alpha_{\omega}.$
\end{theorem}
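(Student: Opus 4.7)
The plan is to verify each of (i), (ii), (iii) by using the tangent vectors $T_{T}$, $T_{N}$, $T_{B}$, $T_{C}$ computed in the proofs of Theorem 4.1 and Theorem 4.2, together with the standard definition of spherical involute: a spherical curve $\gamma^{*}$ on $S^{2}$ is a spherical involute of a spherical curve $\gamma$ when the unit tangent fields satisfy $\langle T_{\gamma^{*}},T_{\gamma}\rangle=0$ at corresponding points. The orthogonality relations (2.4) of the modified frame then do most of the work.

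For part (i), I would pair $T_{T}=N/\kappa$ (from just before (4.2)) with $T_{C}=\cos\phi\,T-(\sin\phi/\kappa)B$ from (4.16); because $N$ is orthogonal to both $T$ and $B$, the inner product $\langle T_{T},T_{C}\rangle$ vanishes identically, with no extra assumption on $\alpha$. Part (ii) is completely parallel: the formula $T_{B}=-N/\kappa$ from (4.10) is derived under the hypothesis $\kappa=\mathrm{const}$, which matches the assumption of (ii), and then $\langle T_{B},T_{C}\rangle=0$ by the same orthogonality of $N$ to the $(T,B)$-plane.

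The genuine obstacle is part (iii). The formula $T_{N}=-\cos\phi\,T+(\sin\phi/\kappa)B$ from (4.5) gives $T_{N}=-T_{C}$, so the tangents are antiparallel rather than perpendicular and the direct test fails. The way out uses the helix hypothesis: $\tan\phi=\tau/\kappa$ constant yields $\phi'=0$, and then (4.15) shows $dC/ds_{C}=0$, so $\alpha_{w}$ degenerates to a single constant vector, namely the axis direction of the helix. In this limiting setting the involute relation must be reinterpreted as ``$\alpha_{N}$ is a spherical circle centered at $\alpha_{w}$''; the extra assumption that $\alpha_{N}$ has constant geodesic curvature, combined with $\phi'=0$ in the formula for $\gamma_{N}$ of Theorem 5.1, forces $\kappa=\mathrm{const}$, so $\alpha_{N}$ is indeed a circle on $S^{2}$, and the identity $\langle N/\kappa,C\rangle=0$ (valid because $N\perp T,B$ while $C\in\mathrm{span}\{T,B\}$) then places this circle on the great circle at angular distance $\pi/2$ from the point $\alpha_{w}$, completing the argument.
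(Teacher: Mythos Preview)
Your treatment of (i) and (ii) is exactly what the paper does: pair the unit tangents $T_{T}=N/\kappa$ and $T_{B}=-N/\kappa$ (the latter derived under $\kappa=\mathrm{const}$) with $T_{C}=\cos\phi\,T-(\sin\phi/\kappa)B$, and read off orthogonality from $\langle N,T\rangle=\langle N,B\rangle=0$.

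For (iii) your route genuinely diverges from the paper's. The paper leaves the factors $\phi'$ and $ds/ds_{C}$ unsimplified, arrives at
\[
\langle T_{N},T_{C}\rangle=\phi'\kappa\left\Vert w\right\Vert\frac{ds}{ds_{N}}\frac{ds}{ds_{C}},
\]
and then invokes $\phi'=0$ from the helix hypothesis to declare this zero. Your observation that in fact $T_{N}=-T_{C}$ (compare (4.5) with (4.16)) exposes that this factor of $\phi'$ is illusory: since $ds/ds_{C}=1/\phi'$, the product above is identically $-1$ whenever both unit tangents exist, and the paper's vanishing is only formal. What your argument buys is an honest treatment of the degenerate limit: when $\phi'=0$ the Darboux indicatrix $\alpha_{w}$ collapses to a single point and $T_{C}$ is undefined, so the involute condition has to be reinterpreted; you then show geometrically that $\alpha_{N}$ is the great circle with pole $\alpha_{w}$. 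The paper's version is shorter but conceals that the orthogonality in (iii) is vacuous rather than genuine; your version makes the underlying geometry explicit at the cost of a few extra steps and a nonstandard reading of ``spherical involute'' in the degenerate case.
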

\begin{proof}
Tangents of $(C)$\bigskip \ constant pole and motion pole curves are common.
we already know for spherical indicatries of curve $\alpha :I\rightarrow
E^{3}$ the following equalities%
\begin{equation}
\left \{ \alpha _{T}=T,\alpha _{B}=B,\alpha _{w}=\dfrac{\tau }{\left \Vert
w\right \Vert }T+\dfrac{1}{\left \Vert w\right \Vert }B\ (\kappa
=const.)\right. .  \tag{5.2}
\end{equation}%
Similary, we also know that 
\begin{equation}
\left \{ 
\begin{array}{c}
\alpha _{T}^{\prime }=T_{T}=N(s)\frac{ds}{ds_{T}} \\ 
\alpha _{N}^{\prime }=T_{N}=\left( -\kappa ^{2}T+\tau B\right) \frac{ds}{%
ds_{N}} \\ 
\alpha _{B}^{\prime }=T_{B}=-\tau T\frac{ds}{ds_{B}}, \\ 
\alpha _{C}^{\prime }=T_{C}=\phi ^{\prime }\left( \cos \phi T-\sin \phi \ 
\dfrac{B}{\kappa }\right) \frac{ds}{ds_{C}}.%
\end{array}%
\right.  \tag{5.3}
\end{equation}%
\bigskip

(i) From inner product of unit tangent vector fields of $T_{T}$ and $T_{C}$,
we get%
\begin{equation}
\left \langle T_{T\text{ }},T_{C}\right \rangle =\frac{ds}{ds_{T}}\phi
^{\prime }\left \langle N,\cos \phi T-\sin \phi \  \frac{B}{\kappa }\right
\rangle =0.\  \  \  \  \  \  \  \  \  \  \  \  \  \  \  \  \  \  \  \  \  \  \  \  \  \  \  \  \  
\tag{5.4}
\end{equation}%
So we conclude that $\alpha _{T}$ is a spherical involute of $\alpha _{\omega}$.

(ii) From inner product of unit tangent vector fields of $T_{B}$ and $T_{C}$,
we get 
\begin{eqnarray*}
\left \langle T_{B},T_{C}\right \rangle &=&\left \langle -\tau N\tfrac{ds}{%
ds_{B}},\phi ^{\prime }\left( \cos \phi T-\sin \phi \  \dfrac{B}{\kappa }%
\right) \frac{ds}{ds_{C}}\right \rangle  \notag \\
&=&-\tau \phi ^{\prime }\frac{ds}{ds_{B}}\frac{ds}{ds_{C}}\left \langle
N,\cos \phi T-\sin \phi \  \dfrac{B}{\kappa }\right \rangle =0 .
\end{eqnarray*}%
So $\alpha _{B}$ is a spherical involute of $\alpha _{\omega}$.

(iii) From inner product of unit tangent vector fields of $T_{N}$ and $T_{C}$%
, we get \textrm{\ }%
\begin{eqnarray*}
\left \langle T_{N},T_{C}\right \rangle &=&\left \langle \left( -\kappa
^{2}T+\tau B\right) \frac{ds}{ds_{N}},\phi ^{\prime }\left( \cos \phi T-\sin
\phi \  \dfrac{B}{\kappa }\right) \frac{ds}{ds_{C}}\right \rangle  \notag \\
&=&\phi ^{\prime }\frac{ds}{ds_{N}}\frac{ds}{ds_{C}}\left \langle -\kappa
^{2}T+\tau B,\cos \phi T-\sin \phi \  \dfrac{B}{\kappa }\right \rangle  \notag
\\
&=&-\phi ^{\prime }\frac{ds}{ds_{N}}\frac{ds}{ds_{C}}\kappa \left( \kappa
\cos \phi +\tau \sin \phi \right)  \notag \\
&=&-\kappa \phi ^{\prime }\frac{ds}{ds_{N}}\frac{ds}{ds_{C}}\left[ \tau \cos
\phi \kappa +\sin \phi \right]  \notag \\
\left \langle T_{N},T_{C}\right \rangle &=&\phi ^{\prime }\kappa \left \Vert
w\right \Vert \frac{ds}{ds_{N}}\frac{ds}{ds_{C}}
\end{eqnarray*}

Since $\alpha $ is a helix, $\frac{\tau }{\kappa }=-\cot \phi $ is constant.
We obtain $\phi ^{\prime }$ $=0$. Thus the proof is finished.

\textrm{\  \  \  \  \  \  \  \  \  \  \  \  \  \  \  \  \  \  \  \ }
\end{proof}

\bigskip

\end{document}